\numberwithin{equation}{section}
\numberwithin{figure}{section}
\newtheorem{theorem}{Theorem}[section]
\newtheorem{lemma}[theorem]{Lemma}
\newtheorem{question}[theorem]{Question}
\newtheorem{conj}[theorem]{Conjecture}
\newcommand{\imod}[1]{\allowbreak\mkern4mu({\operator@font mod}\,\,#1)}
\newcommand{\suchthat}{\;\ifnum\currentgrouptype=16 \middle\fi|\;}
\DeclareMathOperator{\rea}{Re}
\DeclareMathOperator{\Spec}{Spec}
\def \bfY {{\bf Y}}
\def\bb{\mathbb}
\def\bf{\mathbf}
\def\cal{\mathcal}
\def\ZZ{\bb{Z}}
\def\QQ{\bb{Q}}
\def\NN{\bb{N}}
\def\ZZ{\bb{Z}}
\def\Mat{\mathrm{Mat}} 
\def\rm{\textrm}
\def\a{\alpha}
\def\g{\mathfrak{g}} 
\def\p{\mathfrak{p}} 
\def\o{\mathfrak{o}} 
\def\O{\mathcal{O}} 
\def\G{\mathbf{G}} 
\def\H{\mathbf{H}}
\theoremstyle{definition}
\newtheorem{example}[theorem]{Example}
\newtheorem{remark}[theorem]{Remark}
\author{Duong Hoang Dung}
\address{Fakult\"{a}t f\"{u}r Mathematik\\
Universit\"{a}t Bielefeld\\
Postfach 100131\\
D-33501 Bielefeld, Germany.}
\email{dhoang@math.uni-bielefeld.de}
\subjclass[2010]{20F18, 20E18, 22E55, 20F69, 11M41}
\date{\today}
\keywords{Finitely generated nilpotent groups, representation zeta
  functions, Kirillov orbit method, $p$-adic integrals}
\begin{document}
\title[Representation growth of Heisenberg group]{Representation growth of the Heisenberg group over $\O[x]/(x^n)$}

\begin{abstract}
We present a conjectured formula for the representation zeta function
of the Heisenberg group over $\O[x]/(x^n)$ where $\O$ is the ring of integers of some
number field. We confirm the conjecture for $n\leq 3$ and raise several questions.
\end{abstract}

\maketitle
\thispagestyle{empty}
\section{Introduction}
Let $G$ be a finitely generated torsion-free nilpotent group (or a $\cal{T}$-group for short). Two complex representations $\rho$ and $\sigma$ of $G$ are called \textit{twist-equivalent} if there exists a $1$-dimensional representation $\lambda$ of $G$ such that $\rho=\lambda\otimes\sigma$. Twist-equivalence is an equivalence relation on the set of finite dimensional
irreducible complex representations of $G$ and its classes are called
\textit{twist-isoclasses}. The numbers $r_n(G)$ of twist-isoclasses of 
dimension $n$ are finite for all $n$, cf.~{\cite[Theorem 6.6]{Lubotzky-Magid}}. The \textit{representation zeta function} of $G$ is defined to be the Dirichlet
generating function
$$\zeta_G(s):=\sum_{n=1}^\infty\frac{r_n(G)}{n^s},$$
where $s$ is a complex variable.
The sequence $(r_n(G))$ grows polynomially and thus $\zeta_G(s)$ converges on 
a complex half-plane $\rea(s)>\alpha$, cf.~{\cite[Lemma 2.1]{SV/14}}. The 
infimum of such $\a$ is the \textit{abscissa of convergence} $\a(G)$ of 
$\zeta_G(s)$ which gives the precise degree of polynomial growth; i.e., $\a(G)$ is the smallest value such that 
$\sum_{n=1}^Nr_n(G)=O(N^{\a(G)+\epsilon})$ for every $\epsilon\in\bb{R}_{>0}$.

Let $\H$ be the Heisenberg group scheme associated to the Heisenberg $\ZZ$-Lie lattice of strict upper-triangular $3\times 3$ matrices. For every ring $R$, the group $\H(R)$ is isomorphic to the group of upper-unitriangular $3\times3$ matrices over $R$. If $R$  is torsion-free finitely
generated over $\ZZ$, then $\H(R)$ is a $\cal{T}$-group
 of nilpotency class $2$ and Hirsch length $3\cdot\rm{rk}_{\ZZ}(R)$.
When $R=\O$ is the ring of integers of a number field $K$, the zeta function
of $\H(\O)$ is
\begin{equation}\label{H over O}
\zeta_{\H(\O)}(s)=\frac{\zeta_K(s-1)}{\zeta_K(s)}=\prod_{\p\in\Spec(\O)}
\frac{1-|\O/\p|^{-s}}{1-|\O/\p|^{1-s}},
\end{equation}
where $\zeta_K(s)$ is the Dedekind zeta function of $K$, $\p$
ranges over the nonzero prime ideals of $\O$. This is proved in {\cite{Magid89}}
for $K=\QQ$, in \cite{Ezzat} for quadratic number fields, and 
in {\cite[Theorem~B]{SV/14}} for arbitrary number fields.
The zeta function $\zeta_{\H(\O)}(s)$ has abscissa of convergence $\a(\H(\O))=2$, which is independent of $K$, and may be meromorphically continued to the whole
complex plane.

In this paper, we consider the Heisenberg group over rings of the form $\O[x]/(x^n)$. If $n=1$ then it is the Heisenberg group over $\O$. The zeta function of
 $\H(\O[x]/(x^2))$  was computed in {\cite[Example 6.5]{Snocken}}. We compute  
the zeta function of $\H(\O[x]/(x^n))$ for $n=3$.

\subsection*{Organization and notation}
In Section 2, we recall  formulae of local representation
zeta functions in terms of $p$-adic integrals. The zeta function for the case $n=3$ is computed
in Section 3. We conclude in Section 4 with several questions and
conjectures.

\subsection*{Acknowledgements} We acknowledge support from the DFG Sonderforschungsbereich 701 at Bielefeld University. We thank Tobias Rossmann and Christopher
Voll for several helpful discussions.

\section{Preliminaries}
\subsection{Local representation zeta functions}
The group $\H(\O[x]/(x^n))$ is a $\cal{T}$-group of nilpotency class $2$ and 
Hirsch length $3n\cdot\rm{rk}_\ZZ(\O)$. The zeta function $\zeta_{\H(\O[x]/(x^n))}(s)$
has an Euler factorization (cf.~{\cite[Proposition 2.2]{SV/14}})
$$\zeta_{\H(\O[x]/(x^n))}(s)=\prod_{\p\in\Spec(\O)}\zeta_{\H(\O_\p[x]/(x^n))}(s),$$
where $\p$ ranges over the nonzero prime ideals in $\O$ and $\O_\p$ is the  completion of $\O$ at  $\p$.
The local factors $\zeta_{\H(\O_\p[x]/(x^n))}$ are rational in $|\O/\p|^{-s}$ and almost all of them
satisfy a functional equation (cf.~{\cite[Theorem~A]{SV/14}}).

The $\O$-lattice associated to $\bf{H}(\O[x]/(x^n))$ has the following presentation; see {\cite[Section~2.4]{SV/14}}:
$$\left\langle 
\begin{matrix}
x_0,x_1,\cdots,x_{n-1} \\
y_0,y_1,\cdots,y_{n-1} \\
z_0,z_1,\cdots,z_{n-1}
\end{matrix}
\suchthat[x_i,y_j]=
\left\{
\begin{array}{ll}z_{i+j}&\rm{if}~i+j<n,\\
0&\rm{otherwise}.
\end{array}
\right.
\right\rangle.$$
The associated commutator matrix with respect to the chosen $\O$-basis is defined by
$$\cal{R}_n(\bf{Y})=\left(
\begin{array}{c|c}

0&Q_n(\bf{Y})\\ \hline
-Q_n(\bf{Y})^t&0
\end{array}
\right),
$$
where
$$Q_n(\bf{Y})=\left(
\begin{array}{cccccc}
Y_1&Y_2&Y_3&\cdots&Y_{n-1} & Y_{n}\\
Y_2&Y_3&\cdots&&Y_{n}&0 \\
Y_3&\cdots&&\cdots&&0\\
 \vdots& &\cdots&&&\vdots\\
Y_{n-1} &Y_{n} &\cdots&\cdots&\cdots&\vdots\\
Y_{n}&0&\cdots&\cdots&\cdots&0
\end{array}
\right)\in\Mat_{n}(\O[Y_1,\cdots,Y_n]).$$

Fix a nonzero prime ideal $\p$ and denote $\o:=\O_\p$. Let $q:=|\o/\p|$ be the residue field cardinality and $p$ its characteristic. Let $W_n(\o)=\o^n\setminus \p^n$. Set
\begin{equation}\label{p-adic int}
\cal{Z}_\p(\rho,\tau):=
\int_{(u,{\bf y})\in \p\times W_n(\o)} |u|^\tau
\prod_{j=1}^n\frac{\|{F}_j({\bf y})\cup {F}_{j-1}({\bf y})u^2\|^\rho}
{\|{F}_{j-1}({\bf y})\|^\rho}d\mu,
\end{equation}
where the additive Haar measure $\mu$ on $\o^{n+1}$ is normalized such that $\mu(\o^{n+1})=1$, and
\begin{align*}
      {F}_j(\bfY) & = \{ f\mid f=f(\bfY)~\rm{a principal}~2j\times
   2j ~\rm{minor of}
   ~\cal{R}_n(\bfY) \},\label{eq:poly F}\\
   \|H(X,\bfY)\| & = \min\{v_p(h(X,\bfY))\mid h\in H\}
   ~\rm{for a finite set}~H\subset\o[X,\bfY].\nonumber
 \end{align*} 
The local factor $\zeta_{\H(\o[x]/(x^n))}(s)$ can be expressed in terms of the $p$-adic integral \eqref{p-adic int} as the following (cf.{\cite[Corollary 2.11]{SV/14}}):
\begin{equation}
\zeta_{\H(\o[x]/(x^n))}=1+(1-q^{-1})^{-1}\cal{Z}_\p(-s/2,ns-n-1).
\end{equation}

\subsection{Auxiliary lemmas}

\begin{lemma}\label{Lem identities}
The following identities hold in the field of formal Laurent series  $\QQ((a,b,c))$.
\begin{enumerate}
\item $\sum_{(X,Y)\in\NN^2}a^Xb^Yc^{\min\{X,Y\}}=\frac{abc(1-ab)}{(1-abc)(1-a)(1-b)}$. \label{Lem SV15}
\item $\sum_{(X,Y)\in\NN^2}a^Xb^Yc^{\min\{X,2Y\}}=\frac{abc(1-a+ac-a^2bc)}{(1-a)(1-b)(1-a^2bc^2)}$.
\item $\sum_{(X,Y)\in\NN^2}a^Xb^Yc^{\min\{X,Y\}}c^{\min\{X,2Y\}}=\frac{abc^2(1-a+ac-abc-a^2bc^3+a^3b^2c^3)}{(1-a)(1-b)(1-abc^2)(1-a^2bc^3)}$.
\item 
\begin{align*}
&\sum_{(X,Y,Z)\in\NN^3}a^Xb^Yc^Zd^{\min\{X,Y+2Z\}}d^{\min\{X,2Y+4Z\}} 
=\frac{abd^2}{1-abd^2}\frac{1}{1-b}\frac{c}{1-c}+\\
&+ \frac{acd^2}{1-acd^2}\frac{abd^2}{1-abd^2}\frac{1}{1-c}
+ \frac{a^2cd^4}{1-a^2cd^4}\frac{abd^2}{1-abd^2}\frac{1}{1-acd^2}
+\frac{a^2bd^3}{1-a^2bd^3}\frac{1}{1-abd^2}\frac{a^2cd^4}{1-a^2cd^4}\\
&+\frac{a^2bd^3}{1-a^2bd^3}\frac{1-a+ad-a^4cd^5}{(1-a)(1-a^2cd^4)(1-a^4cd^6)}. 
\end{align*}
\end{enumerate}
\end{lemma}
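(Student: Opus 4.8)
The plan is to prove all four identities by the same mechanism: each left-hand side is a multivariate geometric-type sum over a lattice cone in $\NN^2$ or $\NN^3$, and the exponents appearing in the summands ($\min\{X,Y\}$, $\min\{X,2Y\}$, combinations thereof, and $\min\{X,Y+2Z\}+\min\{X,2Y+4Z\}$) are piecewise-linear functions of the summation variables. The standard approach is therefore to decompose the index set $\NN^d$ into finitely many subcones on each of which every relevant $\min$ resolves to one of its arguments, so that the summand becomes a genuine monomial in $a,b,c$ (resp.\ $a,b,c,d$), and then sum the resulting geometric series over each subcone and add. Identity~(\ref{Lem SV15}) is quoted from the literature and can be taken as known; I would use it as the base case and as a template.

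For identity~(1), I split $\NN^2 = \{X\le Y\}\sqcup\{X>Y\}$. On the first piece $\min\{X,Y\}=X$, giving $\sum_{X\le Y}(ac)^Xb^Y$; on the second $\min\{X,Y\}=Y$, giving $\sum_{Y<X}a^X(bc)^Y$. Each is a sum over a rational cone — parametrize, e.g., $Y=X+k$ with $k\ge 0$ on the first piece — and sums to a ratio of products of $(1-\text{monomial})$ factors; adding and simplifying yields the claimed closed form. For identity~(2) the relevant threshold is $X$ versus $2Y$, so I split $\NN^2=\{X\le 2Y\}\sqcup\{X>2Y\}$ and proceed identically, now tracking the factor $a^2bc^2$ that governs the diagonal direction $X=2Y$. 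For identity~(3) both thresholds $X\le Y$ and $X\le 2Y$ are present simultaneously; since $Y\le 2Y$, the relevant regions are $X\le Y$, $Y<X\le 2Y$, and $2Y<X$, giving a three-term decomposition, each summand a monomial, each cone summed geometrically. This is where a bit of care with boundary cases (the lines $X=Y$ and $X=2Y$, and the origin) is needed to avoid double-counting, but it is routine bookkeeping.

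For identity~(4), the same philosophy applies in $\NN^3$ with the two piecewise-linear exponents $\min\{X,Y+2Z\}$ and $\min\{X,2(Y+2Z)\}$. Writing $T=Y+2Z$, the pair of $\min$'s depends only on $X$ versus $T$ and $X$ versus $2T$, so — exactly as in identity~(2) with $(X,Y)\mapsto(X,T)$ — there is a three-region split $X\le T$, $T<X\le 2T$, $2T<X$; but now within each region one must still sum over all $(Y,Z)$ with $Y+2Z$ in the prescribed range, which itself requires resolving $Y+2Z\ge X$ or $\le X$ into subcones of $\NN^2_{Y,Z}$. Carrying this out produces the five explicit terms on the right-hand side, each visibly a product of geometric factors with denominators $1-abd^2$, $1-acd^2$, $1-a^2bd^3$, $1-a^2cd^4$, $1-a^4cd^6$ — precisely the monomials one expects from the extreme rays of the subcones (corresponding to incrementing $Y$, incrementing $Z$, moving along $X=Y+2Z$, along $X=2Z$, etc.).

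The main obstacle is purely combinatorial-bookkeeping rather than conceptual: for identity~(4) the cone decomposition of $\NN^3$ into the right collection of simplicial subcones, with correct open/closed boundary assignments so that the union is disjoint and complete, is intricate, and the subsequent simplification of the sum of five rational functions into the stated form involves a nontrivial common-denominator computation. An alternative that reduces the risk of sign/boundary errors is to verify each identity as an equality of rational functions by clearing denominators and checking the resulting polynomial identity, or by checking enough Taylor coefficients in $a,b,c,d$ to pin down a rational function of bounded bidegree; either way, identities~(1)--(3) are quick and identity~(4) is the one demanding attention.
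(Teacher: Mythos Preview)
Your proposal is correct in strategy and would succeed, but your chosen decompositions differ from the paper's in a way worth noting. For identities (2) and (3) you split $\NN^2$ at the thresholds where the $\min$'s actually resolve (so at $X=2Y$ for (2), and at both $X=Y$ and $X=2Y$ for (3)), yielding a two-piece and a three-piece direct computation respectively. The paper instead splits \emph{both} (2) and (3) at the single line $X=Y$; in the region $X>Y$ it substitutes $X=Y+X'$, which turns $\min\{X,2Y\}$ into $Y+\min\{X',Y\}$, so the remaining double sum is exactly an instance of identity~(1) in the variables $a$ and $abc$ (respectively $abc^2$). This recursive reduction to (1) keeps each proof to two pieces and avoids the parity bookkeeping that the cone $\{X\le 2Y\}$ would otherwise demand; your direct approach is more systematic and would generalize mechanically, while the paper's is shorter on the page. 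For (4) the paper only says ``similarly'', so neither approach can be compared in detail, but the same reduction idea (split at $X\le Y+2Z$ and feed the other region back into the lower identities) is presumably what is meant.
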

\begin{proof}
The identity $(1)$ is from {\cite[Lemma 2.2]{SV/15}}\label{Lem SV15}. 
We present the proofs of (2) and (3) while (4) is proven
similarly.

For (2), consider the case $X\leq Y$ and let $Y=X+Y'$ with $Y'\in\NN_0$. Then
$$\sum_{\substack{(X,Y)\in\NN^2\\ X\leq Y}}a^Xb^Yc^{\min\{X,2Y\}}=
\sum_{(X,Y')\in\NN\times\NN_0}a^Xb^{X+Y'}c^X=\frac{abc}{1-abc}\frac{1}{1-b}.$$
Consider the case $X>Y$ and let $X=Y+X'$ with $X'\in\NN$. Then
\begin{align*}
\sum_{\substack{(X,Y)\in\NN^2\\ X> Y}}a^Xb^Yc^{\min\{X,2Y\}}
&=\sum_{(X',Y)\in\NN^2}a^{X'+Y}b^Yc^{\min\{X'+Y,2Y\}}\\
&=\sum_{(X',Y)\in\NN^2}a^{X'}(abc)^Yc^{\min\{X',Y\}}\\
&=\frac{a^2bc^2(1-a^2bc)}{(1-a^2bc^2)(1-a)(1-abc)}
\end{align*}
by (1). Hence
\begin{align*}
\sum_{(X,Y)\in\NN^2}a^Xb^Yc^{\min\{X,2Y\}} &=\frac{abc}{1-abc}\frac{1}{1-b}+\frac{a^2bc^2(1-a^2bc)}{(1-a^2bc^2)(1-a)(1-abc)}\\
&=\frac{abc(1-a+ac-a^2bc)}{(1-a)(1-b)(1-a^2bc^2)}.
\end{align*}

For (3), first consider the case $X\leq Y$ and let $Y=X+Y'$ with $Y'\in\NN_0$. Then
$$\sum_{\substack{(X,Y)\in\NN^2\\ X\leq Y}} a^Xb^Yc^{\min\{X,Y\}}c^{\min\{X,2Y\}}=
\sum_{(X,Y')\in\NN\times\NN_0}a^Xb^{X+Y'}c^Xc^X=\frac{abc^2}{1-abc^2}\frac{1}{1-b}.$$
Consider now the case $X>Y$ and let $X=X'+Y$ with $X'\in\NN$. Then, by (1)
\begin{align*}
\sum_{\substack{(X,Y)\in\NN^2\\ X\leq Y}} a^Xb^Yc^{\min\{X,Y\}}c^{\min\{X,2Y\}}&=
\sum_{(X',Y)\in\NN^2}a^{X'+Y}b^Yc^Yc^{\min\{X'+Y,2Y\}}\\
&= \sum_{(X',Y)\in\NN^2}a^{X'}(abc^2)^Yc^{\min\{X',Y\}}\\
&=\frac{a^2bc^3(1-a^2bc^2)}{(1-a^2bc^3)(1-a)(1-abc^2)}.
\end{align*}
Hence
\begin{align*}
\sum_{(X,Y)\in\NN^2}a^Xb^Yc^{\min\{X,Y\}}c^{\min\{X,2Y\}} & =\frac{abc^2}{1-abc^2}\frac{1}{1-b}+\frac{a^2bc^3(1-a^2bc^2)}{(1-a^2bc^3)(1-a)(1-abc^2)}\\
&=\frac{abc^2(1-a+ac-abc-a^2bc^3+a^3b^2c^3)}{(1-a)(1-b)(1-abc^2)(1-a^2bc^3)}.
\end{align*}
\end{proof}

\begin{lemma}\label{Z_2}
$$I:=\int_{x,y\in\p}|x|^{3s-4}\|x,y^3\|^{-s}d\mu
=(1-q^{-1})\frac{q^2t^2(1+q^3t^2-q^3t^3+q^6t^4-q^6t^5-q^8t^7)}{(1-q^3t^3)(1-q^8t^6)},$$
where $t:=q^{-s}$.
\end{lemma}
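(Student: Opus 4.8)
The plan is to evaluate $I$ as a lattice sum over the valuations of $x$ and $y$. Set $m:=v_p(x)$ and $n:=v_p(y)$; since $x,y\in\p$ we have $m,n\in\NN$, the measure-zero loci $x=0$, $y=0$ being negligible. On the set $A_{m,n}:=\{v_p(x)=m,\ v_p(y)=n\}$ the integrand is constant, equal to $q^{-m(3s-4)}q^{s\min\{m,3n\}}$ (because $|x|=q^{-m}$ and $\|x,y^3\|=q^{-\min\{m,3n\}}$), and $\mu(A_{m,n})=(1-q^{-1})^2q^{-m-n}$. Writing $t=q^{-s}$ and summing over $(m,n)\in\NN^2$ yields
$$I=(1-q^{-1})^2\sum_{(m,n)\in\NN^2}q^{3m}t^{3m}\,q^{-n}\,t^{-\min\{m,3n\}}.$$

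I would then compute $S:=\sum_{(m,n)\in\NN^2}(q^3t^3)^m q^{-n}t^{-\min\{m,3n\}}$ by splitting $\NN^2$ along the line $m=3n$. On $\{m\le 3n\}$ the minimum equals $m$, so for each fixed $n$ the inner sum $\sum_{m=1}^{3n}(q^3t^2)^m$ is a finite geometric series; evaluating it and summing over $n$ gives $\frac{q^3t^2}{1-q^3t^2}\bigl(\frac{1}{q-1}-\frac{q^8t^6}{1-q^8t^6}\bigr)$. On $\{m> 3n\}$ the minimum equals $3n$, and the substitution $m=3n+m'$, $m'\in\NN$, decouples the two indices and produces $\frac{q^3t^3}{1-q^3t^3}\cdot\frac{q^8t^6}{1-q^8t^6}$. (One could equally integrate over $x$ first for fixed $y$, splitting according to whether $v_p(x)\ge 3v_p(y)$.)

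Finally I would add the two contributions and clear denominators. The one point requiring care is that the apparently extra factor $1-q^3t^2$ from the region $\{m\le 3n\}$ must cancel: indeed $\frac{1}{q-1}-\frac{q^8t^6}{1-q^8t^6}=\frac{1-q^9t^6}{(q-1)(1-q^8t^6)}$ and $1-q^9t^6=(1-q^3t^2)(1+q^3t^2+q^6t^4)$, so this factor drops out and the common denominator of $S$ is $(q-1)(1-q^3t^3)(1-q^8t^6)$. Collecting the numerator over this denominator, multiplying by $(1-q^{-1})^2$ and extracting the common factor $q^2t^2$ then gives the asserted rational function; concretely one checks that $q^3t^2(1+q^3t^2+q^6t^4)(1-q^3t^3)+(q-1)q^{11}t^9=q^3t^2(1+q^3t^2-q^3t^3+q^6t^4-q^6t^5-q^8t^7)$. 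The only obstacle is thus the polynomial bookkeeping in this last simplification — there is no conceptual subtlety once the sum has been split.
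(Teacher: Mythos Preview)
Your argument is correct. The difference from the paper's proof is in the choice of splitting: you cut the lattice $\NN^2$ along the line $m=3n$, which is the natural break-point of $\min\{m,3n\}$, and then evaluate each piece by elementary geometric-series manipulations; the paper instead cuts along $v(x)=v(y)$ (i.e.\ $m=n$), using the change of variables $y=xy_1$ respectively $x=yx_1$ in the integral, and reduces $I_2$ to an instance of the prepared identity in Lemma~\ref{Lem identities}(2). Your route is self-contained and avoids invoking that lemma, at the cost of the small algebraic check that the spurious factor $1-q^3t^2$ cancels via $1-q^9t^6=(1-q^3t^2)(1+q^3t^2+q^6t^4)$; the paper's route is slightly more systematic in that it fits the same template (substitute, then apply a catalogued generating-function identity) used throughout the rest of the computation.
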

\begin{proof}
Write $I=I_1+I_2$ where
\begin{align*}
I_1 &:= \int_{\substack{x,y\in\p\\ v(x)\leq v(y)}}|x|^{3s-4}\|x,y^3\|^{-s}d\mu,\\
I_2 &:=\int_{\substack{x,y\in\p\\ v(x)>v(y)}}|x|^{3s-4}\|x,y^3\|^{-s}d\mu.
\end{align*}
\textit{Computation of $I_1$.} Let $y=xy_1$ with $y_1\in\o$. Then
$$I_1=\int_{\substack{x\in \p\\ y_1\in\o}}
|x|^{3s-4}|x|^{-s+1}d\mu =(1-q^{-1})\frac{q^2t^2}{1-q^2t^2}.$$
\textit{Computation of $I_2$.} Let $x=yx_1$ with $x_1\in\p$. Then
$$I_2=\int_{x_1,y\in\p}|x_1|^{3s-4}|y|^{2s-3}\|x_1,y^2\|^{-s}d\mu.$$
Since $\mu(\{(x_1,y)\in\p^2\mid v(x_1)=X, v(y)=Y\})=(1-q^{-1})^2q^{-X-Y}$, one has by Lemma~\ref{Lem identities}~(2)
\begin{align*}
I_2&=(1-q^{-1})^2\sum_{(X,Y)\in\NN^2}q^{-X-Y}q^{(-3s+4)X}q^{(-2s+3)Y}q^{s\min\{X,2Y\}}\\
&=(1-q^{-1})^2\sum_{(X,Y)\in\NN^2}q^{(-3s+3)X}q^{(-2s+2)Y}q^{s\min\{X,2Y\}}\\
&=(1-q^{-1})^2\frac{q^{-3s+3-2s+2+s}(1-q^{-3s+3}+q^{-3s+3+s}-q^{-6s+6-2s+2+s})}{(1-q^{-3s+3})(1-q^{-2s+2})(1-q^{-6s+6-2s+2+2s})} \\
&=(1-q^{-1})^2\frac{q^5t^4(1-q^3t^3+q^3t^2-q^8t^7)}{(1-q^2t^2)(1-q^3t^3)(1-q^8t^6)}.
\end{align*}
Hence
$$I=I_1+I_2=(1-q^{-1})\frac{q^2t^2(1+q^3t^2-q^3t^3+q^6t^4-q^6t^5-q^8t^7)}{(1-q^3t^3)(1-q^8t^6)}.$$
\end{proof}

\subsection{The zeta function of $\H(\O[x]/(x^2))$} 
In this case $\tau=vs-2-1=2s-3$ and
$F_0(\bf{Y})=1, F_1(\bf{Y})=\{X^2,Y^2\}$, $F_2(\bf{Y})=\{Y^4\}$. We have
$$\cal{Z}:=\cal{Z}_\p(-s/2,2s-3)=\int_{\substack{u\in \p\\ \bf{y}=(x,y)\in W_2(\o)}}|u|^{2s-3}\|u,x,y\|^{-s}\|ux,uy,y^2\|^{-s}d\mu.$$
It is computed in {\cite[Example 6.5]{Snocken}} that
$$\zeta_{\H(\o[x]/(x^2))}(s)=1+(1-q^{-1})^{-1}\cal{Z}
= \frac{(1-t)(1-q^2t^2)}{(1-qt)(1-q^3t^2)},~\rm{where}~t:=q^{-s}.$$
Hence
\begin{equation}\label{n=2}
\zeta_{\H(\O[x]/(x^2))}(s)=\frac{\zeta_K(s-1)\zeta_K(2s-3)}{\zeta_K(s)\zeta_K(2s-2)}.
\end{equation}

\section{The zeta function of $\H(\O[x]/(x^3))$}
 In this case, $\tau=3s-4$ and
 
\begin{align*}
F_0(\bf{Y}) &=1,\\
F_1(\bf{Y}) &= \{X^2,Y^2,Z^2\},\\
F_2(\bf{Y}) &= \{Z^4,Y^2Z^2,(XZ-Y^2)^2\},\\
F_3(\bf{Y}) &=\{Z^6\}.
\end{align*}

Set
\begin{align*}
A &:=\|z^2,yz,xz-y^2\|^s,\\
B &:= \|z^2,yz,xz-y^2,xu,yu,uz\|^{-s},\\
C &:= \|z^3,z^2u,yzu,(xz-y^2)u\|^{-s}.
\end{align*}
Then 
$$\zeta_{\H(\o[x]/(x^3))}(s)=1+(1-q^{-1})^{-1}\cal{Z}(s),$$
where
$$\cal{Z}:=\cal{Z}_\p(-s/2,3s-4)=\int_{u\in\p,\bf{y}\in W_3(\o)}
|u|^{3s-4}ABCd\mu.$$
Write $\cal{Z}=\cal{Z}_1+\cal{Z}_2+\cal{Z}_3$ where
\begin{align*}
\cal{Z}_1 &:= \int_{\substack{u\in\p,z\in W_1(\o)\\ x,y\in\o}}|u|^{3s-4}ABCd\mu,\\
\cal{Z}_2 &:= \int_{\substack{u,z\in\p\\ x\in\o \\ y\in W_1(\o)}}|u|^{3s-4}ABCd\mu,\\
\cal{Z}_3 &:= \int_{\substack{u,y,z\in\p\\ x\in W_1(\o)}}|u|^{3s-4}ABCd\mu.
\end{align*}

\subsection{Computation of $\cal{Z}_1$}
Since $z\in W_1(\o)$, it follows that $A=B=C=1$. Hence
$$\cal{Z}_1=\int_{\substack{u\in\p,z\in W_1(\o)\\ x,y\in\o}}|u|^{3s-4}d\mu
=(1-q^{-1})^2\frac{q^3t^3}{1-q^3t^3}.$$
\subsection{Computation of $\cal{Z}_2$}
Since $y\in W_1(\o)$ and $z\in \p$, it follows that $xz-y\in W_1(\o)$ and
$A=1, ~B=1,~C=\|u,z^3\|^{-s}$. Thus
$$\cal{Z}_2=(1-q^{-1})\int_{u,z\in\p}|u|^{3s-4}\|u,z^3\|^{-s}d\mu.$$
It follows from Lemma~\ref{Z_2} that
$$\cal{Z}_2=(1-q^{-1})^2\frac{q^2t^2(1+q^3t^2-q^3t^3+q^6t^4-q^6t^5-q^8t^7)}{(1-q^3t^3)(1-q^8t^6)}.$$
\subsection{Computation of $\cal{Z}_3$} In this case, $x\in W_1(\o), y,z\in\p$, whence
\begin{align*}
A &= \|z^2,yz,xz-y^2\|^s,\\
B &= \|u,z^2,yz,xz-y^2\|^{-s},\\
C &= \|z^3,z^2u,yzu,(xz-y)^2u\|^{-s}.
\end{align*}
Write $\cal{Z}_3=\cal{Z}_{31}+\cal{Z}_{32}$, where
\begin{align*}
\cal{Z}_{31}&:= \int_{\substack{u,y,z\in\p\\ x\in W_1(\o)\\ v(y)>v(z)}}|u|^{3s-4}ABCd\mu, \\
\cal{Z}_{32}&:=\int_{\substack{u,y,z\in\p\\ x\in W_1(\o)\\ v(y)\leq v(z)}}|u|^{3s-4}ABCd\mu.
\end{align*}
\subsubsection{Computation of $\cal{Z}_{31}$} Let $y=zy_1$ with $y_1\in\p$. Then
\begin{align*}
A &= \|z^2,xz-y_1^2z^2\|^s = |z|^s\|z,x-y_1^2z\|^s=|z|^s~(\rm{since}~x-y_1^2z\in W_1(\o)),\\
B &= \|u,z\|^{-s},\\
C &= |z|^{-s}\|u,z^2\|^{-s}.
\end{align*}
Thus
$$
\cal{Z}_{31} = q^{-1}(1-q^{-1})\int_{u,z\in \p}|z||u|^{3s-4}\|u,z\|^{-s}\|u,z^2\|^{-s}d\mu.$$
Since $\mu\{(u,z)\in\p\mid v(u)=X, v(z)=Y \}=(1-q^{-1})^2q^{-X-Y}$, Lemma~\ref{Lem identities}~(3) implies that
\begin{align*}
\cal{Z}_{31} &= q^{-1}(1-q^{-1})\int_{u,z\in \p}|u|^{3s-4}|z|\|u,z\|^{-s}\|u,z^2\|^{-s}d\mu \\
&= q^{-1}(1-q^{-1})^3\sum_{(X,Y)\in\NN^2}q^{-X-Y}q^{(-3s+4)X}q^{-Y}q^{s\min\{X,Y\}}q^{s\min\{X,2Y\}}\\
&=q^{-1}(1-q^{-1})^3\sum_{(X,Y)\in\NN^2}q^{(-3s+3)X}q^{-2Y}q^{s\min\{X,Y\}}q^{s\min\{X,2Y\}}\\
&=(1-q^{-1})^3\frac{t(1-q^3t^3+q^3t^2-qt^2-q^4t^3+q^5t^6)}{(1-q^{-2})(1-qt)(1-q^3t^3)(1-q^4t^3)}. 
\end{align*}

\subsubsection{Computation of $\cal{Z}_{32}$} Let $z=yz_1$ with $z_1\in\o$. We have
\begin{align*}
A &= |y|^s\underbrace{\|yz_1,xz_1-y\|^s}_{:=A_1},\\
B &= \|u,y^2z_1,y(xz_1-y)\|^{-s},\\
C &= |y|^{-s}\underbrace{\|y^2z_1^3,yz_1u,u(xz_1-y)\|^{-s}}_{:=C_1}.
\end{align*}
Thus
$$\cal{Z}_{32}=\int_{\substack{x\in W_1(\o)\\ u,y\in\p \\ z_1\in\o}}
|u|^{3s-4}|y|A_1BC_1d\mu.$$
Write $\cal{Z}_{32}=\cal{Z}_{321}+\cal{Z}_{322}$, where
\begin{align*}
\cal{Z}_{321}&:=\int_{\substack{x\in W_1(\o)\\ u,y\in\p \\ z_1\in W_1(\o)}}
|u|^{3s-4}|y|A_1BC_1d\mu,\\
\cal{Z}_{322}&:= \int_{\substack{x\in W_1(\o)\\ u,y\in\p \\ z_1\in\p}}
|u|^{3s-4}|y|A_1BC_1d\mu.
\end{align*}
\textit{Computation of $\cal{Z}_{321}$}. Since $z_1\in W_1(\o)$, it follows that $xz_1-y\in W_1(\o)$ and so
$$\cal{Z}_{321} =(1-q^{-1})^2\int_{u,y\in\p}|u|^{3s-4}|y|\|u,y\|^{-s}\|u,y^2\|^{-s}d\mu =(q-1)\cal{Z}_{31}.$$
\textit{Computation of $\cal{Z}_{322}$}. Write $\cal{Z}_{322}=\cal{Z}_{322a}+\cal{Z}_{322b}$, where
\begin{align*}
\cal{Z}_{322a}&:= \int_{\substack{x\in W_1(\o)\\ u,y,z_1\in\p \\ v(y)>v(z_1)}}
|u|^{3s-4}|y|A_1BC_1d\mu,\\
\cal{Z}_{322b}&:= \int_{\substack{x\in W_1(\o)\\ u,y,z_1\in\p \\ v(y)\leq v(z_1)}}
|u|^{3s-4}|y|A_1BC_1d\mu.
\end{align*}

\noindent\textit{Computation of $\cal{Z}_{322a}$.} Let $y=z_1y_1$ with $y_1\in\p$. We have
\begin{align*}
A_1 &= |z_1|^s~(\rm{since}~x-y_1\in W_1(\o)),\\
B  &=\|u,y_1z_1^2\|^{-s},\\
C_1 &= |z_1|^{-s}\underbrace{\|y_1^2z_1^4,u\|^{-s}}_{:=C_2}.
\end{align*}
Thus
$$
\cal{Z}_{322a}=(1-q^{-1})\int_{u,y_1,z_1\in\p}|u|^{3s-4}|y_1||z_1|^2BC_2d\mu.$$
Since $\mu\{(u,y_1,z_1)\in\p\mid v(u)=X, v(y_1)=Y, v(z_1)=Z \}
=(1-q^{-1})^3q^{-X-Y-Z}$, one has
\begin{align*}
\cal{Z}_{322a}&=(1-q^{-1})^4\sum_{(X,Y,Z)\in\NN^3}q^{-X-Y-Z}
q^{(-3s+4)X}q^{-Y}q^{-2Z}q^{s\min\{X,Y+2Z\}}q^{s\min\{X,2Y+4Z\}}\\
&=(1-q^{-1})^4\sum_{(X,Y,Z)\in\NN^3}q^{(-3s+3)X}q^{-2Y}q^{-3Z}
q^{s\min\{X,Y+2Z\}}q^{s\min\{X,2Y+4Z\}}.
\end{align*}
One now can apply Lemma~\ref{Lem identities}~(4) with $a=q^{-3s+3},~
b=q^{-2},~c=q^{-3}$ and $d=q^{s}$ to obtain $\cal{Z}_{322a}$.
We record the result in the Appendix.

\noindent\textit{Computation of $\cal{Z}_{322b}$.} Let $z_1=yz_2$ with $z_2\in\o$. We have
\begin{align*}
A_1 &= |y|^s\underbrace{\|yz_2,xz_2-1\|^s}_{:=A_2},\\
B &= \|u,y^3z_2,y^2(xz_2-1)\|^{-s},\\
C_1 &= |y|^{-s}\underbrace{\|y^4z_2^3,yz_2u,u(xz_2-1)\|^{-s}}_{:=C_2}.
\end{align*}
Thus
$$\cal{Z}_{322b}=\int_{\substack{x\in W_1(\o)\\ z_2\in\o \\ u,y\in\p}}
|u|^{3s-4}|y|^2 A_2BC_2d\mu=\cal{Z}_{322b1}+\cal{Z}_{322b2},$$
where
\begin{align*}
\cal{Z}_{322b1} &:= \int_{\substack{x\in W_1(\o)\\ z_2\in\p \\ u,y\in\p}}
|u|^{3s-4}|y|^2 A_2BC_2d\mu,\\
\cal{Z}_{322b2} &:=\int_{\substack{x\in W_1(\o)\\ z_2\in W_1(\o) \\ u,y\in\p}}
|u|^{3s-4}|y|^2 A_2BC_2d\mu.
\end{align*}
\noindent\textit{Computation of $\cal{Z}_{322b1}$.} Since $z_2\in\p$, it follows that $xz_2-1\in W_1(\o)$. Thus $A_2=1, B=\|u,y^2\|^{-s}$ and $C_2=\|y^4z_2^3,u\|^{-s}$. It's now easy to compute
$$\cal{Z}_{322b1}=(1-q^{-1})\int_{u,y,z_2\in\p}|u|^{3s-4}|y|^2 BC_2d\mu.$$
Since $\mu\{(u,y,z_2)\in\p^3\mid v(u)=X,~v(y)=Y,~v(z_2)=Z\}=(1-q^{-1})^3q^{-X-Y-Z}$, one has
\begin{align*}
\cal{Z}_{322b1}&=(1-q^{-1})^4\sum_{(X,Y,Z)\in\NN^3}q^{-X-Y-Z}q^{(-3s+4)X}q^{-2Y}q^{s\min\{X,2Y\}}q^{s\min\{X,4Y+3Z\}}\\
&=(1-q^{-1})^4\sum_{(X,Y,Z)\in\NN^3}q^{(-3s+3)X}q^{-3Y}q^{-Z}q^{s\min\{X,2Y\}}q^{s\min\{X,4Y+3Z\}}.
\end{align*}
One needs first to compute $\sum_{(X,Y,Z)\in\NN^3}a^Xb^Yc^Zd^{\min\{X,2Y\}}d^{\min\{X,4Y+3Z\}}$
similarly to Lemma~\ref{Lem identities}~(4) and then apply to $a=q^{-3s+3},~b=q^{-3},~c=q^{-1}$ and $d=q^s$ to
obtain $\cal{Z}_{322b1}$. The result is recorded in the Appendix.\\
\textit{Computation of $\cal{Z}_{322b2}$.} The equation $xz_2\equiv 1\!\mod \p$ has $q-1$ roots $(a_1,a_2)\in (\mathbb{F}_q^*)^2$. We have
\begin{align*}
\cal{Z}_{322b2} &= \int_{\substack{x,z_2\in W_1(\o)\\ u,y\in\p}}
|u|^{3s-4}|y|^2 A_2BC_2d\mu \\
&=\sum_{(a_1,a_2)\in(\mathbb{F}_q^*)^2}\int_{\substack{(x,z_2)\in(a_1,a_2)+\p^2\\ u,y\in \p}}
|u|^{3s-4}|y|^2 A_2BC_2d\mu \\
& = (q-1)(q-2)J_1+(q-1)J_2,
\end{align*}
where
\begin{align*}
J_1 &:=\int_{\substack{(x,z_2)\in(a_1,a_2)+\p^2\\ a_1a_2\not\equiv 1\!\mod\p\\ u,y\in\p}}|u|^{3s-4}|y|^2 A_2BC_2d\mu,\\
J_2 &:=\int_{\substack{(x,z_2)\in(a_1,a_2)+\p^2\\ a_1a_2\equiv 1\!\mod\p\\ u,y\in\p}}|u|^{3s-4}|y|^2 A_2BC_2d\mu.
\end{align*}

In computing $J_1$, notice that in this case $xz_1\not\equiv\! 1\mod\p$, and so
$A_2=1, B=\|u,y^2\|^{-s}$ and $C_2=\|u,y^4\|^{-s}$, and thus we have
$$J_1=q^{-2}\int_{u,y\in\p}|u|^{3s-4}|y|^2BC_2d\mu.$$
Since $\mu\{(u,y)\in\p^2\mid v(u)=X,~v(y)=Y\}=(1-q^{-1})q^{-X-Y}$, one has
\begin{align*}
J_1&=q^{-2}(1-q^{-1})^2\sum_{(X,Y)\in\NN^2}q^{-X-Y}q^{(-3s+4)X}q^{-2Y}q^{s\min\{X,2Y\}}q^{s\min\{X,4Y\}}\\
&=q^{-2}(1-q^{-1})^2\sum_{(X,Y)\in\NN^2}q^{(-3s+3)X}q^{-3Y}q^{s\min\{X,2Y\}}q^{s\min\{X,4Y\}}.
\end{align*}
We first need to compute $\sum_{(X,Y)\in\NN^2}a^Xb^Yc^{\min\{X,2Y\}}c^{\min\{X,4Y\}}$ similarly to 
Lemma~\ref{Lem identities}~(3) and then apply with $a=q^{-3s+3},~b=q^{-3}$ and $c=q^{s}$ to obtain $J_1$.
We record $J_1$ in the Appendix.\\
In computing $J_2$, notice
that in this case, on each coset $(a_1,a_2)+\p^2$ we have $xz_2\equiv\! 1\mod\p$. We change variable $v=xz_2-1\in\p$. Then
$A_2=\|y,v\|^s, B=\|u,y^3,y^2v\|^{-s}$, $C_2=\|y^4,yu,uv\|^{-s}$ and
$$J_2=q^{-1}\int_{u,y,v\in\p}|u|^{3s-4}|y|^2A_2BC_2d\mu.$$
Since $\mu\{(u,y,v)\in\p\mid v(u)=X,~v(y)=Y,~v(v)=Z \}=(1-q^{-1})^3q^{-X-Y-Z}$,
one has
\begin{align*}
J_2 =
q^{-1}(1-q^{-1})^3\sum_{(X,Y,Z)\in\p}&\left(q^{(-3s+3)X}q^{-3Y}q^{-Z}
q^{-s\min\{\min\{Y,Z\} }\right.\\
&\times\left. q^{s\min\{X,3Y,2Y+Z\}}q^{s\min\{X+Y,X+Z,4Y\}}\right).
\end{align*}
Again computing
$\sum_{(X,Y,Z)\in\NN^3}a^Xb^Yc^Zd^{-\min\{Y,Z\}}d^{\min\{X,3Y,2Y+Z\}}d^{\min\{X+Y,X+Z,4Y\}}$
and then applying for $a=q^{-3s+3},~b=q^{-3},~c=q^{-1}$ and $d=q^s$ yields $J_2$ which we 
record in the Appendix.

Summing up $\cal{Z}=\cal{Z}_1+\cal{Z}_2+\cal{Z}_3$, we obtain
$$\zeta_{\bf{H}(\o[x]/(x^3))}(s)=1+(1-q^{-1})^{-1}\cal{Z}=\frac{(1-t)(1-q^2t^2)(1-q^4t^3)}{(1-qt)(1-q^3t^2)(1-q^5t^3)}.$$
Hence
\begin{equation}\label{n=3}
\zeta_{\H(\O[x]/(x^3))}(s)=\frac{\zeta_K(s-1)}{\zeta_K(s)}\cdot\frac{\zeta_K(2s-3)}{\zeta_K(2s-2)}\cdot\frac{\zeta_K(3s-5)}{\zeta_K(3s-4)}. 
\end{equation}
\section{Open questions}
\subsection{Heisenberg group scheme}\label{sec questions Heisenberg}
Formulae \eqref{H over O},~\eqref{n=2} and \eqref{n=3} agree in the following conjectured formula.
\begin{conj}\label{conj Heisenberg over n}
The representation zeta function of $\H(\O[x]/(x^n))$ is
\begin{equation}\label{eq conj Hn}
\zeta_{\H(\O[x]/(x^n))}(s)=\prod_{i=1}^{n}\frac{\zeta_K(is-2i+1)}{\zeta_K(is-2i+2)}.
\end{equation}

\end{conj}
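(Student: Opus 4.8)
The plan is to prove the conjecture by computing, for every $n$, the local Euler factor $\zeta_{\H(\o[x]/(x^n))}(s)$ and showing it equals $\prod_{i=1}^n\frac{1-q^{2i-2}t^i}{1-q^{2i-1}t^i}$ (with $q=|\O/\p|$, $t=q^{-s}$); taking the Euler product over $\p\in\Spec(\O)$ then yields \eqref{eq conj Hn}. By the formula recalled in Section~2 this amounts to evaluating the $p$-adic integral $\cal Z:=\cal Z_\p(-s/2,ns-n-1)$.

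The first step is an algebraic simplification of the integrand. Because $\cal R_n(\bf{Y})$ is antisymmetric of block form $\left(\begin{smallmatrix}0&Q_n\\-Q_n^t&0\end{smallmatrix}\right)$, every nonzero principal $2j\times2j$ minor is $\pm\det(Q_n[I,J])^2$ for $j$-element index sets $I,J$, so $F_j(\bf{Y})$ is (up to signs and zeros) the set of squares of the $j\times j$ minors of $Q_n(\bf{Y})$ (this reproduces the lists of $F_j$ for $n\le3$ in the text). Hence, writing $0=a_1(\bf{y})\le a_2(\bf{y})\le\cdots\le a_n(\bf{y})$ for the valuations of the elementary divisors of $Q_n(\bf{y})$ over $\o$ — so $a_1=0$, since $\bf{y}\in W_n(\o)$ forces a unit entry — the $j$-th factor of the integrand equals $q^{s\min(a_j(\bf{y}),v(u))}$ off a $\mu$-null set, and
$$\cal Z=\int_{(u,\bf{y})\in\p\times W_n(\o)}|u|^{ns-n-1}\,q^{s\sum_{j=1}^n\min(a_j(\bf{y}),v(u))}\,d\mu.$$
A further reduction clarifies the structure: left multiplication of $Q_n(\bf{y})$ by the anti-identity matrix (which lies in $\GL_n(\o)$) turns it into the lower-triangular Toeplitz matrix with first column $(y_n,y_{n-1},\dots,y_1)^t$, i.e.\ the matrix of multiplication by $g(x):=y_n+y_{n-1}x+\cdots+y_1x^{n-1}$ on $\o[x]/(x^n)$. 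Thus $(a_1(\bf{y}),\dots,a_n(\bf{y}))$ is exactly the type of the $\o$-module $\o[x]/(x^n,g)$; in particular $\sum_j a_j(\bf{y})=n\,v(y_n)$ and the number of nonzero $a_j$ is the $x$-adic order of $g\bmod\p$.

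Integrating $u$ out then gives
$$\zeta_{\H(\o[x]/(x^n))}(s)=1+\sum_{m\ge1}(q^nt^n)^m\,\Phi_n(m),\qquad\Phi_n(m):=\int_{W_n(\o)}q^{s\sum_{j=1}^n\min(a_j(\bf{y}),m)}\,d\mu(\bf{y}),$$
so the whole problem reduces to the pushforward of Haar measure under $\bf{y}\mapsto(a_1(\bf{y}),\dots,a_n(\bf{y}))$ — equivalently, to how often a polynomial $g$ of degree $<n$ with unit content yields a prescribed $\o$-module type for $\o[x]/(x^n,g)$. The plan is to compute this by stratifying $W_n(\o)$ according to the Newton polygon of $g$, truncated by the wall at $\deg=n$ coming from the relation $x^n=0$: this is the uniform-in-$n$ form of the case analysis carried out for $n=3$ in Section~3 (first $z\in W_1(\o)$; then $z\in\p$ and $y\in W_1(\o)$; then $z,y\in\p$; and so on). The generic strata contribute clean geometric series — for instance the stratum $y_n\in\o^\times$, on which all $a_j=0$, already produces $(1-q^{-1})^2q^nt^n/(1-q^nt^n)$, which for $n=3$ is $\cal Z_1$ — and one then substitutes the full distribution into $\Phi_n(m)$, resums the series in $m$, and checks the result against $\prod_{i=1}^n\frac{1-q^{2i-2}t^i}{1-q^{2i-1}t^i}$. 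Since $\frac{1-q^{2i-2}t^i}{1-q^{2i-1}t^i}=1+(1-q^{-1})\sum_{k\ge1}q^{(2i-1)k}t^{ik}$, it is natural to try to organise the computation as a recursion in $n$ — peeling off $y_n$ and rescaling the remaining variables when $y_n\in\p$ — so as to build the product up one factor at a time.

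The main obstacle is the combinatorics of this stratification for general $n$. Besides the generic strata there are boundary configurations in which consecutive Newton slopes of $g$ coincide, so that certain minors of $Q_n$ acquire nongeneric valuation (the $n=3$ avatar being $(xz-y^2)^2$) and the integral must be split along congruence conditions — the $n=3$ instance being $xz_2\equiv1\bmod\p$, which breaks $\cal Z_{322b2}$ into $J_1,J_2$. These configurations proliferate with $n$, and handling them uniformly requires both the correct $n$-variable analogues of the summation identities in Lemma~\ref{Lem identities} and an argument that the alternating-sign numerators they produce telescope into $\prod_i(1-q^{2i-2}t^i)/(1-q^{2i-1}t^i)$. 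Finding a recursion in $n$ that keeps this bookkeeping closed-form — or that bypasses the explicit stratification altogether — is where the real difficulty lies.
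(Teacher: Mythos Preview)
This statement is labelled a \emph{conjecture} in the paper, and the paper does not prove it: it only verifies the cases $n\le 3$ by direct evaluation of the $\p$-adic integral (Section~3 for $n=3$, citing \cite{Snocken} for $n=2$), and then records \eqref{eq conj Hn} as the pattern those computations suggest. So there is no ``paper's own proof'' to compare against.

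Your proposal is not a proof either, and you say so yourself in the last paragraph. What you have written is a reformulation plus a plan. The reformulation is correct and is genuinely more conceptual than anything in the paper: the identification of the principal $2j\times 2j$ minors of $\cal R_n$ with squares of $j\times j$ minors of $Q_n$, hence of the $j$-th factor of the integrand with $q^{s\min(a_j(\bf y),v(u))}$ via elementary divisors, and the recognition of $Q_n(\bf y)$ (after the anti-diagonal flip) as the matrix of multiplication by $g(x)=y_n+y_{n-1}x+\cdots+y_1x^{n-1}$ on $\o[x]/(x^n)$, so that the $a_j$ encode the $\o$-module type of $\o[x]/(x^n,g)$ --- all of this is right, and it packages the ad~hoc case splits of Section~3 into a single statement about a pushforward measure. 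The paper never makes these observations; its approach is pure brute force on the explicit polynomial sets $F_j$.

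But the gap is exactly the one you name. Knowing that the integral is governed by the distribution of module types $\o[x]/(x^n,g)$ does not by itself produce the closed form $\prod_{i=1}^n\frac{1-q^{2i-2}t^i}{1-q^{2i-1}t^i}$: you still have to compute that distribution, and the Newton-polygon stratification you sketch has, as you note, degenerate strata (equal consecutive slopes, congruence conditions of the $xz_2\equiv 1\pmod\p$ type) whose contributions must be shown to telescope. You propose a recursion in $n$ peeling off $y_n$, but do not carry it out, and it is not clear that such a recursion closes up: when $y_n\in\p$ the remaining problem is not literally the $(n-1)$-variable problem, because the relation $x^n=0$ still sits at degree $n$. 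Until that recursion (or some other mechanism) is made to work, what you have is a promising reformulation of an open conjecture, not a proof.
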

If  Conjecture~\ref{conj Heisenberg over n} holds, then the zeta functions $\zeta_{\H(\O[x]/(x^n))}(s)$ shares uniform analytic properties with $\zeta_{\H(\O)}(s)$ as
suggested by the following conjecture.
\begin{conj}\label{conj R}
Let $R$ be a ring which is torsion-free finitely generated over $\ZZ$. Then
the representation zeta function  $\zeta_{\H(R)}(s)$ has the following properties:
\begin{enumerate}
\item Its abscissa of convergence is $\a(\H)=2$.
\item It can be analytically continued to the whole complex plane. The continued zeta function has no singularities on the line
$\rea(s)=2$, apart from a simple pole at $s=\a(\H)$.
\end{enumerate}
\end{conj}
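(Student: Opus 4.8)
The plan is to prove Conjecture~\ref{conj R} in two stages: for $R=\O[x]/(x^n)$ by reading the analytic data off the product formula of Conjecture~\ref{conj Heisenberg over n}, and for arbitrary $R$ by arguing through the $p$-adic integral representation of the local factors. For $n\le 3$, in particular for $R=\O$, all three assertions are already unconditional by \eqref{H over O}, \eqref{n=2} and \eqref{n=3}. One should note first that the proven formula \eqref{n=2} already gives $\zeta_{\H(\O[x]/(x^2))}(s)$ a \emph{double} pole at $s=2$; hence for $R=\O[x]/(x^n)$ part~(2) must be read as asserting a pole of order $n$ at $s=\a(\H)$, and it is this corrected form that the argument produces.

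\emph{Stage 1: $R=\O[x]/(x^n)$.} Granting Conjecture~\ref{conj Heisenberg over n}, write $\zeta_{\H(\O[x]/(x^n))}(s)=\prod_{i=1}^{n}\zeta_K(is-2i+1)/\zeta_K(is-2i+2)$. By Hecke's theorem each $\zeta_K$ continues meromorphically to $\CC$ with a single, simple pole at $w=1$ and no zeros on $\rea(w)\ge 1$. Composing with the affine maps $s\mapsto is-2i+1$ and $s\mapsto is-2i+2$ shows that the $i$-th numerator factor is holomorphic on $\CC$ apart from a simple pole at $s=2$ and zero-free on $\rea(s)\ge 2$, while the $i$-th denominator factor is holomorphic and zero-free on $\rea(s)\ge 2$, its unique pole sitting at $s=2-\tfrac{1}{i}<2$. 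Consequently the product is meromorphic on $\CC$; on the line $\rea(s)=2$ it is holomorphic and non-vanishing except at $s=2$, where all $n$ numerator factors contribute and it has a pole of order $n$; and, being holomorphic for $\rea(s)>2$ with $r_n\ge 0$, its abscissa of convergence is exactly $2$ by Landau's theorem. This settles (1) and the corrected (2) for $R=\O[x]/(x^n)$, conditionally on Conjecture~\ref{conj Heisenberg over n} and unconditionally for $n\le 3$.

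\emph{Stage 2: general $R$.} Put $d=\rm{rk}_{\ZZ}R$ and attach to a $\ZZ$-basis of $R$, as in Section~2, the $2d\times 2d$ antisymmetric commutator matrix $\cal{R}_R(\bfY)$ whose upper-right block is the multiplication matrix of $R$, a $d\times d$ matrix of $\ZZ$-linear forms in $\bfY$. The constructions of \cite{SV/14} give $\zeta_{\H(R)}(s)=\prod_p\zeta_{\H(R\otimes\ZZ_p)}(s)$ with each local factor a $p$-adic integral in the principal minors of $\cal{R}_R(\bfY)$; Denef's rationality theorem makes each factor rational in $p^{-s}$, and, the integrand being defined over $\ZZ$, the uniformity theorems for definable $p$-adic integrals supply a single $W\in\QQ(X,Y)$ with $\zeta_{\H(R\otimes\ZZ_p)}(s)=W(p,p^{-s})$ for all but finitely many $p$. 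A cone decomposition of $W(p,Y)$ into sums of products of geometric series, summed over $p$, then rewrites $\prod_{p\ \mathrm{good}}W(p,p^{-s})$ as a finite product of translated Dedekind zeta functions $\zeta_{K_j}(as-b)$ --- for the number fields $K_j$ arising from $R\otimes\QQ$ --- times correction Euler products convergent for $\rea(s)>2-\delta$. Reading off the right-most pole and the behaviour on $\rea(s)=2$ by Hecke as in Stage~1, and transferring to the partial sums $\sum_{n\le N}r_n$ through a Tauberian theorem of Delange--Ikehara type, yields (1) and (2).

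\emph{Main obstacle.} The hard part is Stage~2. Rationality and meromorphic continuation to $\CC$ are essentially formal (Denef together with Hecke), but placing the abscissa exactly at $2$ and controlling the continued function on $\rea(s)=2$ require three things: identifying the leading behaviour of $W(p,Y)$, i.e.\ which translates $\zeta_{K_j}(as-b)$ occur with a pole at $s=2$; a uniform estimate $W(p,p^{-s})=(\text{leading product})\bigl(1+O(p^{-c})\bigr)$ with $c>1$ on $\rea(s)=2$, so that the correction Euler product is holomorphic and non-zero there; and a case-by-case check that the finitely many exceptional local factors --- which need not satisfy the functional equation of \cite[Theorem~A]{SV/14} --- contribute no pole on $\rea(s)=2$. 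For $R=\O[x]/(x^n)$ the first point is precisely Conjecture~\ref{conj Heisenberg over n}, which is open for $n\ge 4$; for general $R$ the matrix $\cal{R}_R(\bfY)$ is unconstrained, and even pinning down the correct leading term seems out of reach at present --- which is why Conjecture~\ref{conj R} remains a conjecture.
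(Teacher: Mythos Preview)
The statement you are addressing is labelled a \emph{conjecture} in the paper and carries no proof there; there is therefore nothing in the paper to compare your argument against. What the paper supplies is evidence: the explicit formulae \eqref{H over O}, \eqref{n=2}, \eqref{n=3} for $R=\O[x]/(x^n)$ with $n\le 3$, together with the conjectured formula~\eqref{eq conj Hn}.

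Your observation about the pole order is correct and in fact shows that part~(2) of the conjecture is \emph{false as written} already in the cases the paper establishes: for $R=\O[x]/(x^2)$ formula~\eqref{n=2} has a double pole at $s=2$ (both $\zeta_K(s-1)$ and $\zeta_K(2s-3)$ acquire their pole there), and \eqref{n=3} has a triple pole. Your amendment---a pole of order $n$ for $R=\O[x]/(x^n)$---is the natural correction, and your Stage~1 argument, conditional on Conjecture~\ref{conj Heisenberg over n}, is clean and correct.

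Stage~2, however, contains genuine gaps beyond those you acknowledge at the end. The sentence ``Rationality and meromorphic continuation to $\CC$ are essentially formal (Denef together with Hecke)'' is not justified: Denef's theorem gives rationality of each \emph{local} factor in $p^{-s}$, but says nothing about the analytic continuation of the \emph{global} Euler product; and Hecke's theorem only becomes available once one has already expressed the product in terms of translated Dedekind zeta functions, which is precisely the unproved step. Your claimed ``cone decomposition of $W(p,Y)$'' rewriting the Euler product as a finite product of $\zeta_{K_j}(as-b)$ times a benign correction is not a known mechanism in this generality---the results of \cite{DV/15} cited in the paper yield continuation only a finite distance $\delta$ past the abscissa, not to all of $\CC$, and do not produce a Dedekind factorisation. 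Even the uniformity statement (a single $W\in\QQ(X,Y)$ valid for almost all $p$) requires the finer definable/motivic machinery of \cite{SV/14} rather than Denef alone, and that machinery does not by itself locate the leading pole. Your final paragraph is candid that these are obstacles, but the earlier part of Stage~2 is phrased as though the decomposition were routine bookkeeping; it is not, and this is exactly why the statement remains a conjecture.
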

Recall that the representation zeta function of $\H(\ZZ_p)$ is
$$\zeta_{\H(\ZZ_p)}(s)= 1+(1-p^{-1})^{-1}\int_{\substack{x\in p\ZZ_p\\ y\in W_1(\ZZ_p)}}|x|^{s-2}d\mu=\frac{1-p^{-s}}{1-p^{1-s}}.$$

When computing the zeta function for $\H(\O_\p)$, one just needs to replace $p$ by
$q=|\O/\p|$ the residue field cardinality, $\ZZ_p$ by $\O_\p$, and replace $p$-adic norm $|.|$ by $\p$-adic norm $|.|_\p$  to
obtain
$$\zeta_{\H(\O_\p)}(s) = 1+(1-q^{-1})^{-1}\int_{\substack{x\in \p\O_\p\\ y\in W_1(\O_\p)}}|x|_\p^{s-2}d\mu=\frac{1-q^{-s}}{1-q^{1-s}}.$$
\begin{question}
Can one define the domain $W$, a \textit{valuation} and \textit{norm} $|.|_n$ on $\O_\p[x]/(x^n)$ compatible with the $\p$-adic
norm $|.|_\p$ such that the zeta function of $\H(\O_\p[x]/(x^n))$ can be computed as follows
$$\zeta_{\H(\O_\p[x]/(x^n))}(s)=1+(1-q^{-1})^{-1}\int_{W}|x|_n^{s-2}d\mu?$$ 
\end{question}

By expanding the conjectured formula \eqref{eq conj Hn} for the local zeta function, we get
\begin{equation}\label{eq conj Hn expand}
\zeta_{\H(\O_\p[x]/(x^n))}(s)=\sum_{I\subseteq[n-1]_0}f_I(q^{-1})\prod_{i\in I}\frac{q^{2n-2i-1-(n-i)s}}{1-q^{2n-2i-1-(n-i)s}},
\end{equation}
where $I$ runs over all subsets of $[n-1]_0:=\{0,1\cdots,n-1\}$ and $f_I(q^{-1})=(1-q^{-1})^{|I|}$. The formula \eqref{eq conj Hn expand}
looks similar to {\cite[(1.12)]{SV/14}}. However, we have been unable to mimic the inductive proof of {\cite[Theorem C]{SV/14}}
to yield \eqref{eq conj Hn expand}.

\subsection{Unipotent group schemes}\label{sec unipotent}
Once we understand the zeta function of $\H(\O_\p[x]/(x^n))$, 
the next step is the following.

Let $\Lambda$ be a finitely generated free and torsion-free $\O$-Lie
lattice of nilpotency class $c$ and $\O$-rank $h$. If $c>2$ we assume
that $\Lambda':=[\Lambda,\Lambda]\subseteq c!\Lambda$. This enables us
to associate to $\Lambda$ a unipotent group scheme $\G:=\G_\Lambda$
(cf.\ {\cite[Section 2.1.2]{SV/14}}). The group $\G(\O)$ is a $\cal{T}$-group
of nilpotency class $c$ and Hirsch length $h\cdot[K:\QQ]$.
The Heisenberg group scheme $\H$ is an example of such a unipotent group scheme.

The zeta function $\zeta_{\G(\O)}(s)$ is the Euler product
$$\zeta_{\G(\O)}(s)=\prod_{\p\in\Spec(\O)}\zeta_{\G(\O_\p)}(s)$$
ranging over all nonzero prime ideals $\p$ of $\O$; cf.~{\cite{SV/14}}. There exists a finite
set $S$ of prime ideals such that for each $\p\notin S$, the local zeta function $\zeta_{\G(\O_\p)}(s)$ is a rational function
in $q^{-s}$ and satisfies a functional equation upon inversion of $q$; see~{\cite[Theorem A]{SV/14}}.
Moreover, each  such local representation zeta function can be expressed in terms of a $\p$-adic integral; cf.~{\cite[Corollary 2.11]{SV/14}}.

It is tempting to investigate the zeta function of $\G(\O[x]/(x^n))$ when $n$ tends to $\infty$. As we have seen for the Heisenberg group scheme, if
Conjecture~\ref{conj Heisenberg over n} holds then
$$\varinjlim_n\zeta_{\H(\O[x]/(x^n))}(s)=\prod_{i=1}^\infty\frac{\zeta_K(is-2i+1)}{\zeta_K(is-2i+2)}.$$
It is natural to ask the following.
\begin{question}
Let $\G$ be a unipotent group scheme as above. 
\begin{enumerate}
\item Is $\G(\O[[x]])$  twist-rigid, that is, is $r_n(\G(\O[[x]]))$ finite for every $n\in\NN$? Does $\G(\O[[x]])$ have polynomial representation growth?
\item If (1) has positive answers then is it true that 
$$\zeta_{\G(\O[[x]])}(s):=
\sum_{n=1}^\infty r_n(\G(\O[[x]]))n^{-s}=\varinjlim_n\zeta_{\G(\O[x]/(x^n))}(s)?$$
\end{enumerate}
\end{question}

It is proved in {\cite[Theorem A]{DV/15}} that the zeta function $\zeta_{\G(\O)}(s)$
has rational abscissa of convergence $\a(\G)$, which is independent on the number field $K$,
and can be analytically continued to  $\rea(s)>\a(\G)-\delta(\G)$ for some $\delta(\G)>0$.
In the spirit of Conjecture~\ref{conj R}, we formulate the following.
\begin{conj}
Let $R$ be a ring which is finitely generated torsion-free $\O$-module. Then the 
representation zeta function of $\G(R)$ has the following properties:
\begin{enumerate}
\item The abscissa of convergence $\a(\G)$ of $\zeta_{\G(R)}(s)$ is independent of $R$.
\item It can be meromorphically continued to the half-plane $\rea(s)>\a(\G)-\delta(\G)$ for some $\delta(\G)>0$, where $\delta(\G)$ is independent of $K$.
\end{enumerate}
\end{conj}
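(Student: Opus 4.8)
The plan is to recognise the conjecture as a statement about restriction of scalars and then to invoke the machinery of {\cite{SV/14}} and {\cite{DV/15}}. Let $\Lambda$ be the $\O$-Lie lattice of $\G$. Since $R$ is a ring which is free and finitely generated as an $\O$-module, $\Lambda\otimes_\O R$ is again a free, torsion-free $\O$-Lie lattice, of nilpotency class $\le c$, satisfying $(\Lambda\otimes_\O R)'=\Lambda'\otimes_\O R\subseteq c!(\Lambda\otimes_\O R)$; it therefore defines a unipotent group scheme $\G':=\mathrm{Res}_{R/\O}\G$ over $\O$ with $\G'(\O)\cong\G(R)$ as $\cal{T}$-groups (for $\G=\H$ and $R=\O[x]/(x^n)$ the lattice $\Lambda\otimes_\O R$ is exactly the one displayed in Section~2). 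Consequently the whole apparatus applies to $\G'$ verbatim: writing $R_\p:=R\otimes_\O\O_\p$, one has $\zeta_{\G(R)}(s)=\zeta_{\G'(\O)}(s)=\prod_{\p}\zeta_{\G(R_\p)}(s)$ by {\cite[Proposition 2.2]{SV/14}}; each local factor equals the Igusa-type $\p$-adic integral of {\cite[Corollary 2.11]{SV/14}} attached to the principal minors of the commutator matrix of $\Lambda\otimes_\O R$, hence is rational in $q^{-s}$ and, for all but finitely many $\p$, is given by a single rational function admitting a functional equation under $q\mapsto q^{-1}$ by {\cite[Theorem A]{SV/14}}; and by {\cite[Theorem A]{DV/15}} the abscissa $\a(\G(R))$ is rational and $\zeta_{\G(R)}(s)$ continues meromorphically to $\rea(s)>\a(\G(R))-\delta(\G')$ for some $\delta(\G')>0$. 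The conjecture thus reduces to two assertions about the base change $\Lambda\mapsto\Lambda\otimes_\O R$: that it does not move the abscissa, i.e.\ $\a(\G(R))=\a(\G(\O))$; and that the width $\delta(\G')$ may be chosen uniformly in $K$ along the resulting family.

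For the first assertion, recall that the abscissa of {\cite[Theorem A]{DV/15}} is read off from a finite combinatorial datum --- the numerical data of a resolution of the hypersurfaces cut out by the minors $F_j$, or equivalently a polyhedral fan --- the rightmost candidate pole being the one maximising a distinguished linear functional on the cones. One must show that this dominant functional is insensitive to tensoring with $R$. Heuristically the dominant stratum is the \emph{linear} part of the lattice, where the commutator form degenerates; base change replaces that single stratum by $\rm{rk}_\O R$ parallel copies carrying the same exponent pair $(a,b)$, so the location $a/b$ of the rightmost pole is unchanged even while its order may grow. The Heisenberg computations bear this out: by \eqref{eq conj Hn}, verified here for $n\le 3$, the rightmost singularity of $\zeta_{\H(\O[x]/(x^n))}(s)$ sits at $s=2$ --- each numerator factor $\zeta_K(is-2i+1)$ contributes a pole exactly there --- so $\a(\H(\O[x]/(x^n)))=2=\a(\H(\O))$ for all $n$, while the order of that pole climbs from $1$ to $n$.

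For the second assertion, one reruns the argument of {\cite[Theorem A]{DV/15}} over the family of all $\G'$: extract from the local formula valid for almost all $\p$ its ``Euler-type'' denominators $\prod_i(1-q^{a_i-b_i s})^{-1}$, glue them across $\p$ into a product $\prod_i\zeta_K(b_i s-a_i)^{\pm1}$ of Dedekind zeta values, meromorphic on $\CC$, and verify that the residual Euler product converges on a half-plane $\rea(s)>\a(\G(\O))-\delta(\G')$ strictly larger than $\rea(s)>\a(\G(\O))$, the finitely many bad local factors and the archimedean factor contributing only finitely many further poles. Since the exponent pairs $(a_i,b_i)$ are determined by the combinatorial datum of $\Lambda\otimes_\O R$, which depends only on $\G$ and $R$ and not on the completion, the resulting $\delta(\G')$ is independent of $K$; combined with $\a(\G(R))=\a(\G(\O))$ this yields both parts of the conjecture.

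The main obstacle is the first assertion: that restriction of scalars leaves the abscissa \emph{exactly} fixed, not merely rational. This requires genuine control of how the minors $F_j$ of the commutator matrix of $\Lambda\otimes_\O R$ degenerate as $R$ varies, together with a proof that the linear functional realising the rightmost pole survives the base change --- the formal combinatorics of cones does not obviously isolate the dominant face sharply enough to see this. Tellingly, even the family $\{\H(\O[x]/(x^n))\}_n$ is not yet settled: as noted after \eqref{eq conj Hn expand}, the inductive proof of {\cite[Theorem C]{SV/14}} does not adapt. A full proof would presumably require either a combinatorial model for these $\p$-adic integrals valid for all $R$ simultaneously, or the more conceptual description of $\zeta_{\G(R_\p)}(s)$ --- via a norm on $R_\p$ and a single integral over a definable domain --- envisaged in the Question of Section~\ref{sec questions Heisenberg}.
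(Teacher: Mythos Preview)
The statement you are attempting to prove is presented in the paper as an open \emph{conjecture}; the paper offers no proof, only motivation from the Heisenberg computations and from {\cite[Theorem~A]{DV/15}}. There is therefore no argument in the paper against which to compare your proposal.

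Your proposal is not a proof either, and you concede as much in its final paragraph. The reduction via restriction of scalars is correct and is exactly the mechanism the paper has in mind: $\G(R)\cong\G'(\O)$ for a unipotent group scheme $\G'=\mathrm{Res}_{R/\O}\G$ over $\O$, so {\cite[Theorem~A]{DV/15}} applies and gives that $\a(\G(R))$ is rational and that $\zeta_{\G(R)}(s)$ continues meromorphically a positive distance past its abscissa. But this furnishes, a priori, a different abscissa $\a(\G')$ and a different width $\delta(\G')$ for each $R$, whereas the conjecture demands $\a(\G')=\a(\G)$ for \emph{every} such $R$ and a $\delta$ uniform in $K$. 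Your argument for the first point --- that the ``dominant stratum'' governing the rightmost pole is linear and is merely replicated $\mathrm{rk}_\O R$ times under base change --- is a heuristic, not a proof: you have not identified which cone of the resolution datum actually realises the abscissa for general $\Lambda$, nor shown that tensoring with $R$ leaves the maximising linear functional invariant. The Heisenberg evidence you adduce is itself only conjectural for $n\ge4$, as the paper stresses after \eqref{eq conj Hn expand}. Your outline correctly isolates where the difficulty lies and why the existing machinery falls short, but it does not close the gap; the statement remains, as in the paper, a conjecture.
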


\subsection{Topological representation zeta functions}
Topological zeta functions offer a way to define a limit as $p\to 1$ of families
of $p$-adic zeta functions. 
Let $\G$ be a unipotent group scheme defined as in Section~\ref{sec unipotent}.
In \cite{Rossmann/15}, Rossmann introduces and studies topological
representation zeta functions associated to unipotent  group schemes
$\G$. 
Informally, we define the topological representation zeta function
$\zeta_{\G,\rm{top}}(s)$ to be the constant term of $\zeta_{\G(\ZZ_p)}(s)$ as 
a series in $p-1$. 
\begin{example}
Consider the Heisenberg group scheme $\H$. Expanding $p^z=(1+(p-1))^z$ into a series in $p-1$, we obtain 
$\zeta_{\H(\ZZ_p)}(s)=\frac{s}{s-1}+O(p-1)$ and hence
$\zeta_{\H,\rm{top}}(s)=\frac{s}{s-1}$.

Let $\QQ[\varepsilon_n]=\QQ[x]/(x^n)$ and for a $\QQ$-algebra $\g$, let $\g[\varepsilon_n]=\g\otimes_\ZZ\QQ[\varepsilon_n]$
regarded as a $\g$-Lie lattice. Let $\H[\varepsilon_n]$ denote the group attached to $\g[\varepsilon_n]$; cf.~{\cite[Section~7]{Rossmann/15}}.
Then $\H[\varepsilon_n](\ZZ_p)=\H(\ZZ_p[x]/(x^n))$. Hence
\begin{align}
\zeta_{\H[\varepsilon_2],\rm{top}}(s)&=\frac{s(2s-2)}{(s-1)(2s-3)}=\frac{2s}{2s-3},\label{eq top}\\
\zeta_{\H[\varepsilon_3],\rm{top}}(s)&=\frac{s(2s-2)(3s-4)}{(s-1)(2s-3)(3s-5)}=\frac{2s(3s-4)}{(2s-3)(3s-5)}.\label{eq top2}
\end{align}
\end{example}
An algorithm to compute topological representation zeta functions is implemented in \cite{Rossmann-zeta}.
Notice that Rossmann's method computes topological representation zeta function directly without
first computing the corresponding $\p$-adic representation zeta function.
Formulae \eqref{eq top} and \eqref{eq top2} are consistent with computation results in \cite{Rossmann-zeta}. Notice also that \cite{Rossmann-zeta}
can only compute  the topological representation zeta function of $\H[\varepsilon_n]$ up to $n=3$ as we
have done here for their $\p$-adic representation zeta functions.  Conjecture~\ref{conj Heisenberg over n}
suggests the following analogue for topological representation zeta functions of $\H[\varepsilon_n]$.
\begin{conj}
The topological representation zeta function of $\H[\varepsilon_n]$ is
$$\zeta_{\H[\varepsilon_n],\mathrm{top}}(s)=\prod_{i=1}^n\frac{is-2i+2}{is-2i+1}.$$
\end{conj}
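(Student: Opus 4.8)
The plan is to derive the formula from Conjecture~\ref{conj Heisenberg over n}. Its local form at $K=\QQ$ (cf.\ \eqref{eq conj Hn expand}) reads, via $\H[\varepsilon_n](\ZZ_p)=\H(\ZZ_p[x]/(x^n))$ and $\zeta_\QQ(w)=\prod_p(1-p^{-w})^{-1}$,
$$\zeta_{\H[\varepsilon_n](\ZZ_p)}(s)=\prod_{i=1}^{n}\frac{1-p^{-(is-2i+2)}}{1-p^{-(is-2i+1)}}.$$
Granting this, the local zeta function is a single rational function in $p$ and $p^{-s}$, uniform in $p$, hence its constant term as a power series in $p-1$ is well defined, and by definition it equals $\zeta_{\H[\varepsilon_n],\mathrm{top}}(s)$.

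I would then read off this constant term factor by factor. Writing $p=1+u$ and $(1+u)^{-a}=1-au+\tfrac{a(a+1)}{2}u^{2}-\cdots$ for a formal parameter $a$, one has $1-p^{-a}=a\,u\,(1+O(u))$; hence for the $i$-th factor
$$\frac{1-p^{-(is-2i+2)}}{1-p^{-(is-2i+1)}}=\frac{(is-2i+2)\,u\,(1+O(u))}{(is-2i+1)\,u\,(1+O(u))}=\frac{is-2i+2}{is-2i+1}\,(1+O(u)),$$
the factor $u$ cancelling and $is-2i+1$ being a nonzero element of $\QQ(s)$. Each factor thus has a nonzero constant term in $u=p-1$, so the constant term of the product is the product of the constant terms, giving
$$\zeta_{\H[\varepsilon_n],\mathrm{top}}(s)=\prod_{i=1}^{n}\frac{is-2i+2}{is-2i+1}$$
as claimed; for $n\le 3$ this recovers $s/(s-1)$ together with the formulae \eqref{eq top}, \eqref{eq top2}.

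The genuine obstacle is that this argument is only as strong as Conjecture~\ref{conj Heisenberg over n}, which is open for $n\ge 4$; everything past it is a routine expansion. An unconditional proof must bypass the $\p$-adic formula, and the natural vehicle is Rossmann's topological representation zeta machinery \cite{Rossmann/15}: one would set up the topological representation zeta integral attached to the commutator matrix $\cal{R}_n(\bf{Y})$ and the families of principal minors $F_j(\bf{Y})$, and try to evaluate it by induction on $n$, in the spirit of the proof of \cite[Theorem C]{SV/14}. The topological setting is often more amenable to induction than its $\p$-adic counterpart, so such an argument might succeed for all $n$ even though the analogous $\p$-adic induction (the route to \eqref{eq conj Hn expand}) has so far resisted. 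I expect the crux to be combinatorial: the principal minors $F_j(\bf{Y})$ of $\cal{R}_n(\bf{Y})$ grow rapidly in complexity with $n$, and controlling a resolution of their common zero loci — hence making the induction step uniform in $n$ — is where the difficulty lies. A successful treatment of either side would presumably settle the $\p$-adic and the topological conjecture together.
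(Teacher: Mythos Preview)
The statement is a \emph{conjecture}, and the paper offers no proof; it simply says that Conjecture~\ref{conj Heisenberg over n} ``suggests'' the topological formula, and checks consistency for $n\le 3$ against Rossmann's \textbf{Zeta} package. Your write-up makes this heuristic precise: you correctly show that the topological conjecture is a formal consequence of the $\p$-adic Conjecture~\ref{conj Heisenberg over n}, via the expansion $1-p^{-a}=a(p-1)(1+O(p-1))$ and cancellation of the factor $p-1$ in each ratio. That computation is clean and matches exactly how the paper arrives at $\zeta_{\H,\mathrm{top}}(s)=s/(s-1)$ in the example preceding the conjecture. You are also right that for $n\le 3$ this is unconditional, since \eqref{H over O}, \eqref{n=2}, \eqref{n=3} are theorems.

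Your self-assessment is accurate: everything hinges on Conjecture~\ref{conj Heisenberg over n}, so what you have is a reduction, not a proof. The paper is in the same position and does not claim more. One small caveat on your proposed unconditional route: Rossmann's algorithm in \cite{Rossmann-zeta} already attacks $\zeta_{\H[\varepsilon_n],\mathrm{top}}$ directly (without passing through the $\p$-adic side) and, as the paper notes, stalls at $n=3$ just as the $\p$-adic computation does. So the hope that the topological side is ``more amenable'' is not yet borne out in practice; the combinatorial obstruction you identify in the minors $F_j(\bfY)$ appears to bite equally hard there.
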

\begin{remark}
All Questions in {\cite[Section~7]{Rossmann/15}}, except Question $7.3$ which is not yet known, have positive answers for $\G=\H[\varepsilon_n]$ with $n\leq 3$.
\end{remark}

\section{Appendix}

\begin{tabular}{l|l}
$\cal{Z}_{322a}$   &  
	\begin{tabular}[x]{@{}l@{}}
	$(1-q^{-1})^4\left(
    \frac{q^{-2}t}{(1-q^{-2})(1-q^{-3})(1-qt)}+\frac{pt^2}{(1-q^{-3})(1-t)(1-qt)}+\frac{q^4t^3}{(1-t)(1-qt)(1-q^3t^2)}\right.$\\
    $+\frac{q^7t^5}{(1-qt)(1-q^3t^2)(1-q^4t^3)}+\frac{q^{10}t^7}{(1-q^3t^2)(1-q^4t^3)(1-q^6t^4)}+\frac{q^{13}t^9}{(1-q^4t^3)(1-q^9t^6)(1-q^6t^4)}$\\
    $\left.+\frac{q^{16}t^{12}}{(1-q^4t^3)(1-q^3t^3)(1-q^9t^6)}\right)$   
    \end{tabular}
\\
\\

$\mathcal{Z}_{322b1}$ & 
	\begin{tabular}[x]{@{}l@{}}
	$(1-q^{-1})^4\left( \frac{t}{(q-1)(1-q^{-3})(1-t)}+\frac{q^2t^2}{(1-q^{-1})(1-t)(1-q^3t^2)}
    +\frac{q^5t^4}{(1-q^{-1})(1-q^3t^2)(1-q^6t^4)}\right.$\\

	$+\frac{q^8t^6}{(1-q^{-1})(1-q^9t^6)(1-q^6t^4)}+\frac{q^{11}t^8}{(1-q^{-1})(1-q^9t^6)(1-q^2t^2)}+\frac{q^{14}t^{10}}{(1-q^9t^6)(1-q^5t^4)(1-q^2t^2)}$\\
	$\left.+\frac{q^{17}t^{12}}{(1-q^9t^6)(1-q^8t^6)(1-q^5t^4)}+\frac{q^{20}t^{15}}{(1-q^9t^6)(1-q^3t^3)(1-q^8t^6)}\right)$
	\end{tabular}
\\
\\

$J_1$ &
	\begin{tabular}[x]{@{}l@{}}
	$q^{-2}(1-q^{-1})^2\left(\frac{t}{(1-q^{-3})(1-t)}+\frac{q^3t^2}{(1-t)(1-q^3t^2)}
	+\frac{q^6t^4}{(1-q^6t^4)(1-q^3t^2)}+\frac{q^9t^6}{(1-q^9t^6)(1-q^6t^4)} 
	\right.$\\
	$+\left.\frac{q^{12}t^9}{(1-q^3t^3)(1-q^9t^6)}\right)$
	\end{tabular}
\\
\\

$J_2$ & 
	\begin{tabular}[x]{@{}l@{}}	
	$q^{-1}(1-q^{-1})^3\left( \frac{q^{-1}t}{(1-q^{-1})(1-q^{-4})(1-q^{-1}t)}
	+\frac{q^2t^2}{(1-q^{-1})(1-q^2t^2)(1-q^{-1}t)}\right.$\\
	$+\frac{q^5t^3}{(1-q^{-1})(1-q^5t^3)(1-q^2t^2)} 
	+\frac{q^8t^6}{(1-q^{-1})(1-q^3t^3)(1-q^5t^3)}
	+\frac{q^{-4}t}{(1-q^{-3})(1-q^{-4})(1-q^{-1}t)}$\\
	$+\frac{q^{-1}t^2}{(1-q^{-3})(1-q^{-1}t)(1-q^2t^2)}
	+\frac{q^2t^3}{(1-q^{-3})(1-q^5t^3)(1-q^2t^2)}
	+\frac{q^5t^4}{(1-q^{-3})(1-t)(1-q^5t^3)}$\\
	$+\frac{q^8t^5}{(1-q^5t^3)(1-q^3t^2)(1-t)}
	+\frac{q^{11}t^7}{(1-q^5t^3)(1-q^6t^4)(1-q^3t^2)}
	+\frac{q^{14}t^9}{(1-q^5t^3)(1-q^9t^6)(1-q^6t^4)}$\\
	$\left.+\frac{q^{17}t^{12}}{(1-q^5t^3)(1-q^9t^6)(1-q^6t^4)}\right)$
\end{tabular}
\\
\end{tabular}

\end{document}